\newtheorem{theorem}{Theorem}[section]
\newtheorem{lemma}[theorem]{Lemma}
\newtheorem{proposition}[theorem]{Proposition}
\theoremstyle{definition}
\theoremstyle{remark}
\newtheorem{remark}[theorem]{Remark}
\numberwithin{equation}{section}
\begin{document}
\title[Liouville property for $f$-harmonic functions]
{Liouville property for $f$-harmonic functions with polynomial growth}
\author{Jia-Yong Wu}
\address{Department of Mathematics, Shanghai Maritime University, 1550 Haigang Avenue, Shanghai 201306, P. R. China}\email{jywu81@yahoo.com}

\date{Received: March 25, 2018; Accepted: August 29, 2018.}
\subjclass[2000]{Primary 53C21; Secondary 58J05}
\keywords{Bakry-\'{E}mery Ricci curvature; smooth metric measure space; Liouville property}
\begin{abstract}
We prove a Liouville property for any $f$-harmonic function with
polynomial growth on a complete noncompact smooth metric measure space
$(M,g,e^{-f}dv)$ when the Bakry-\'Emery Ricci curvature is nonnegative
and its diameter of geodesic sphere has sublinear growth.
\end{abstract}
\maketitle

\section{Introduction}\label{Int1}
In this paper, we will study a Liouville property for any $f$-harmonic function with
polynomial growth on complete smooth metric measure spaces with the nonnegative
Bakry-\'Emery Ricci curvature.

Recall that an $n$-dimensional complete smooth metric measure space, customarily denoted by $(M,g,e^{-f}dv_g)$, is an $n$-dimensional Riemannian manifold $(M,g)$ together with a weighted volume element $e^{-f}dv_g$ for some $f\in C^\infty(M)$, and
the volume element $dv_g$ induced by the Riemannian metric $g$. The $f$-Laplacian $\Delta_f$ on smooth metric measure space $(M,g,e^{-f}dv)$, self-adjoint with respect to $e^{-f}dv_g$, is defined by
\[
\Delta_f=\Delta-\nabla f\cdot\nabla.
\]
On smooth metric measure space $(M, g, e^{-f}dv_g)$, a smooth function $u$ is called weighted harmonic
(or $f$-harmonic) if $\Delta_f u=0$, and $f$-subharmonic if $\Delta_fu\geq 0$.
It is easy to see that the absolute value of an $f$-harmonic function is a nonnegative
$f$-subharmonic function. If $f$ is constant, all notions mentioned above reduce to the classical Riemannian case.

A natural generalization of Ricci curvature associated to smooth metric measure space
$(M,g,e^{-f}dv_g)$ is called $m$-Bakry-\'Emery Ricci curvature, which is defined by
\[
\mathrm{Ric}_f^m=\mathrm{Ric}+\nabla^2 f-\frac{1}{m}df\otimes df
\]
for some number $m>0$. Bakry and \'Emery \cite{[BE]} extensively studied this curvature
tensor and its relationship to diffusion processes. When $m$ is finite, the Bochner
formula for the $m$-Bakry-\'Emery Ricci curvature can be read as
\[
\frac 12\Delta_f|\nabla u|^2\geq\frac{(\Delta_f u)^2}{m+n}+\langle\nabla\Delta_f u, \nabla
u\rangle+\mathrm{Ric}_f^m(\nabla u, \nabla u),
\]
which could be viewed as the Bochner formula for the Ricci curvature of an
$(n+m)$-dimensional manifold. Hence many classical results for manifolds
with Ricci curvature bounded below can be extended to smooth metric
measure spaces with $m$-Bakry-\'Emery Ricci curvature bounded below, see for example
\cite{[BQ1],[LD], [Wang], [WW0], [Wu0], [WuWu3]} for details. When $m=\infty$, one denotes
\[
\mathrm{Ric}_f=\mathrm{Ric}_f^\infty=\mathrm{Ric}+\nabla^2 f.
\]
In particular, if
\[
\mathrm{Ric}_f=\lambda\, g
\]
for some $\lambda\in\mathbb{R}$, then $(M, g, e^{-f}dv_g)$ is a gradient Ricci soliton,
which arises from the singularity analysis of the Ricci flow \cite{[Ham]}. When
$\mathrm{Ric}_f$ is bounded below, some analytical and geometric properties on smooth
metric measure spaces were also possibly explored as long as the weight function
$f$ is assumed to be some condition. We refer to the work of Dung \cite{[Du]}, Lott
\cite{[Lott1]}, Wei and Wylie \cite{[WW]}, and \cite{[Wu],[WuWu1],[WuWu2], [WuWu3]}
for further details.

Recently, many Liouville theorems for $f$-harmonic functions on smooth metric measure spaces have been studied. For example,
Wei and Wylie \cite{[WW]} proved that any positive $f$-harmonic function with some growth condition with $\mathrm{Ric}_f\geq K>0$ must be constant. Brighton \cite{[Bri]} generalized Yau's gradient result \cite{[Yau2]} and derived a gradient estimate for positive $f$-harmonic functions. He also obtained a Liouville theorem for positive bounded $f$-harmonic functions. Inspired by Brighton's argument, Munteanu and Wang \cite{[MuWa]} applied the De Giorgi-Nash-Moser theory to get a sharp gradient estimate and a Liouville-type result for positive $f$-harmonic functions on smooth metric measure spaces. Dung and Dat \cite{[DD]} proved that any positive weighted $p$-eigenfunction is constant if it is of sublinear growth on smooth metric measure spaces with $\mathrm{Ric}^m_f\geq0$. Recently, Wang et al. \cite{[WZZZ]} proved positive weighted $p$-eigenfunction is constant when $\mathrm{Ric}^m_f$ is bounded below by using the Moser's iterative technique.

Motivated by the classical works of \cite{[Li],[Li-Sch], [Yau]}, many $L_f^p$-Liouville ($0<p<\infty$) theorems on smooth metric measure spaces were extensively studied. Here $L_f^p$-Liouville theorem means that every $L_f^p$-integrable $f$-harmonic function on smooth metric measure spaces is constant. Recall that $u$ is called $L_f^p$-integrable, i.e. $u\in L_f^p$, if its $L_f^p$-norm,
defined as
\[
\|u\|_p:=\left(\int_M |u|^pe^{-f}dv\right)^{1/p},
\]
is finite. In \cite{[LD]} Li proved an $L_f^1$-Liouville theorem for $f$-subharmonic functions on smooth metric measure spaces with $\mathrm{Ric}_f^m\geq-c(1+r(x)^2)$, where $r(x):=d(o,x)$ is the distance function starting from a base point $o\in M$,
$0<m<\infty$. Pigola, Rimoldi, and Setti \cite{[PRS]} proved a sharp $L_f^p$-Liouville theorem with $p>1$ for $f$-subharmonic functions on smooth metric measure spaces without any curvature assumption. When $f$ is bounded, the author \cite{[Wu]} derived some $L_f^p$-Liouville theorems with $0<p\leq 1$ for $f$-subharmonic functions on smooth metric measure spaces with some Bakry-Emery curvature assumptions. When $f$ is not bounded,  P. Wu and the author \cite{[WuWu1], [WuWu2], [WuWu3]} systematically studied various $L_f^p$-Liouville theorems with $0<p\leq 1$ on smooth metric measure spaces under different Bakry-Emery curvature conditions. In particular, we obtained a sharp $L_f^1$-Liouville theorem when $\mathrm{Ric}_f\geq 0$ by using $f$-heat kernel Gaussian upper estimates on smooth metric measure spaces (see also \cite{[CL]}).

\vspace{0.4cm}

In this paper, we prove a new Liouville theorem for any $f$-harmonic function with polynomial growth
when the Bakry-\'Emery Ricci curvature is nonnegative and its diameter of geodesic sphere is sublinear.
The proof uses a local $f$-Neumann Poincar\'e inequality, and an iterated argument combined with a
cut off function technique, using a similar argument of \cite{[Ca]} (see also \cite{[WuWu1],[WuWu2]}).
\begin{theorem}\label{Mainthm}
Let $(M,g,e^{-f}dv)$ be an $n$-dimensional complete noncompact smooth metric measure space
with
\[
\mathrm{Ric}_f\ge 0\quad\mathrm{and}\quad  \sup_{x\in M}|f(x)|<+\infty.
\]
For a base point $o\in M$ and $R>0$, if the diameter of geodesic sphere $B_o(R)$
has a sublinear growth:
\[
\mathrm{diam}\, \partial B_o(R):=\sup_{x,y\in\partial B_o(R)}d(x,y)=o(R),\quad R\to \infty,
\]
then any $f$-harmonic function with polynomial growth is constant.
\end{theorem}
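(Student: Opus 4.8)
The plan is to control the oscillation of $u$ on large geodesic balls and force it to vanish. Let $u$ be $f$-harmonic with $|u(x)|\le C_u(1+r(x))^{d}$ for some $C_u,d>0$, and set $\omega(R):=\operatorname{osc}_{\overline{B_o(R)}}u$. Since $\Delta_f=\Delta-\nabla f\cdot\nabla$ is a smooth second order elliptic operator with no zeroth order term, the maximum principle gives $\max_{\overline{B_o(R)}}u=\max_{\partial B_o(R)}u$ and $\min_{\overline{B_o(R)}}u=\min_{\partial B_o(R)}u$, so $\omega(R)=\operatorname{osc}_{\partial B_o(R)}u$; moreover $\omega$ is nondecreasing and $\omega(R)\le 2C_u(1+R)^{d}$. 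The goal is to show $\omega\equiv0$. The second ingredient is a local gradient estimate for \emph{positive} $f$-harmonic functions: for fixed $R>0$ and $\eta>0$, $w:=u-\min_{\overline{B_o(4R)}}u+\eta>0$ is $f$-harmonic on $B_o(4R)$, so, in our setting ($\mathrm{Ric}_f\ge0$, $\sup|f|<\infty$), the gradient estimate of Brighton \cite{[Bri]}, in the sharp form of Munteanu--Wang \cite{[MuWa]}, gives $\sup_{B_o(2R)}|\nabla\log w|\le C/R$ with $C=C(n,\sup|f|)$. Hence $|\nabla u|=|\nabla w|\le (C/R)\,w$ on $B_o(2R)$, and letting $\eta\to0$,
\[
\sup_{B_o(2R)}|\nabla u|\ \le\ \frac{C}{R}\,\omega(4R).
\]

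Now I would bring in the sublinear diameter hypothesis. Write $D(R):=\operatorname{diam}\partial B_o(R)=o(R)$ and fix $R$ large enough that $D(R)\le R$. If $x,y\in\partial B_o(R)$, a minimizing geodesic from $x$ to $y$ has length $d(x,y)\le D(R)$ and remains in $B_o(R+D(R))\subset B_o(2R)$; integrating $|\nabla u|$ along it and using the estimate just obtained,
\[
|u(x)-u(y)|\ \le\ D(R)\,\sup_{B_o(2R)}|\nabla u|\ \le\ \frac{C\,D(R)}{R}\,\omega(4R).
\]
Taking the supremum over $x,y\in\partial B_o(R)$ and recalling $\omega(R)=\operatorname{osc}_{\partial B_o(R)}u$, this yields, for all $R\ge R_0$,
\[
\omega(R)\ \le\ \varepsilon(R)\,\omega(4R),\qquad \varepsilon(R):=\frac{C\,D(R)}{R}\ \longrightarrow\ 0\quad(R\to\infty).
\]

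The final step is an iteration. Applying this $k$ times gives $\omega(R)\le\bigl(\prod_{j=0}^{k-1}\varepsilon(4^{j}R)\bigr)\omega(4^{k}R)$. Fix $R\ge R_0$. Since $\varepsilon\to0$, there is $J=J(R)$ with $\varepsilon(4^{j}R)\le 4^{-d-1}$ for $j\ge J$, so $\prod_{j=0}^{k-1}\varepsilon(4^{j}R)\le A_R\,4^{-(d+1)k}$ for some finite $A_R$, while $\omega(4^{k}R)\le 2C_u(1+4^{k}R)^{d}\le A_R'\,4^{dk}$. Multiplying, $\omega(R)\le A_RA_R'\,4^{-k}\to0$ as $k\to\infty$, hence $\omega(R)=0$. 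As $\omega$ is nonnegative and nondecreasing, $\omega\equiv0$, i.e.\ $u$ is constant on every $\overline{B_o(R)}$, hence on $M$.

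I expect the gradient estimate in the first paragraph to be the main obstacle: it is exactly where both standing hypotheses are used, since $\mathrm{Ric}_f\ge0$ together with $\sup|f|<\infty$ supplies the weighted volume doubling and the local $f$-Neumann Poincar\'e inequality underlying \cite{[Bri],[MuWa]}; one must also justify the shift $w=u-\min_{\overline{B_o(4R)}}u+\eta$ and the limit $\eta\to0$. An essentially equivalent, self-contained route that matches the tools advertised above and follows \cite{[Ca]} (cf.\ \cite{[WuWu1],[WuWu2]}) would replace the two pointwise steps by integral ones: a Caccioppoli inequality $\int_{B_o(R)}|\nabla u|^{2}e^{-f}\le\frac{C}{R^{2}}\int_{B_o(2R)}(u-c)^{2}e^{-f}$ obtained by testing $\Delta_fu=0$ against $\varphi^{2}(u-c)$ for a cutoff $\varphi$, together with the local $f$-Neumann Poincar\'e inequality applied on thin spherical shells, where the sublinear diameter makes the effective Poincar\'e constant of order $o(R^{2})$; one then runs the same geometric iteration on $E(R):=\int_{B_o(R)}|\nabla u|^{2}e^{-f}$, using the polynomial growth of $u$ to overcome the geometric factor.
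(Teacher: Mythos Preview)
Your primary argument hinges on the local Cheng--Yau type estimate $\sup_{B_o(2R)}|\nabla\log w|\le C/R$ with $C=C(n,\sup|f|)$, and this is precisely where it breaks. Neither cited reference supplies that bound. Munteanu--Wang prove a \emph{global} estimate $|\nabla\log u|\le C(n)$ for positive $f$-harmonic functions defined on all of $M$ under $\mathrm{Ric}_f\ge 0$; there is no $1/R$ scaling, and their integral argument does not localize to produce one. Brighton's estimate is likewise not of local Cheng--Yau form. The obstruction is structural: in the Bochner inequality for $h=|\nabla\log u|^2$ the term $(\Delta v)^2=(\,h-\langle\nabla f,\nabla v\rangle\,)^2$ introduces $|\nabla f|^2$, which is uncontrolled when only $\sup|f|$ is bounded, so the Li--Yau maximum-principle proof does not go through with a constant depending only on $(n,\sup|f|)$. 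And the $1/R$ factor is not optional for your iteration: with only $|\nabla\log w|\le C$ you would obtain $\omega(R)\le C\,D(R)\,\omega(4R)$, and since $D(R)$ typically tends to infinity (only $D(R)/R\to 0$ is assumed), no contraction results.

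Your final paragraph, by contrast, is exactly the route the paper takes. The paper works with $E(R)=\int_{B_o(R)}|\nabla u|^2 e^{-f}dv$, uses the cutoff identity you wrote to bound $E(r)$ by $(\epsilon R)^{-2}\int_{B_o(r+\epsilon R)\setminus B_o(r)}(u-c)^2 e^{-f}$, then invokes the sublinear-diameter hypothesis to enclose the annulus in a \emph{single remote ball} $B_x(\epsilon(R+r))$ and applies the local $f$-Neumann Poincar\'e inequality there. This yields a fixed contraction $E(r-3\epsilon R)\le \theta\,E(r+3\epsilon R)$ with $\theta=\theta(n,\bar A)<1$, which iterated $\sim 1/(6\epsilon)$ times across $[R,2R]$ gives $E(R)\le \delta^{1/\epsilon-1}E(2R)$. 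Since $\epsilon(R)\to 0$, this exponential decay beats the polynomial bound $E(2R)\le C R^{2\nu+n+4\bar A-2}$ (from Caccioppoli plus the $f$-volume growth), forcing $E(R)=0$. So your sketch has the right skeleton; the two points to fill in are that the diameter hypothesis is used to cover the shell by one remote ball (not merely along a geodesic), and that the gain per step is a fixed $\theta<1$ while the \emph{number} of steps is $\sim 1/\epsilon(R)\to\infty$.
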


\begin{remark}
Munteanu and Wang \cite{[MuWa]} proved the Liouville property for all \emph{positive} $f$-harmonic
functions on smooth metric measure spaces. Our Liouville theorem hold for $f$-harmonic
functions which are not necessarily positive.
\end{remark}

\begin{remark}
In the proof of  Theorem  \ref{Mainthm}, the finiteness of function $f$ makes
the iterated procedure to work efficiently. It is interesting to ask whether
this condition can be removed.
\end{remark}

\begin{remark}
For smooth Riemannian manifolds whose sectional curvature satisfies a quadratic
decay lower bound and whose geodesic spheres have sublinear growth, A. Kasue
\cite{[Ka1], [Ka2]} proved that any harmonic function with polynomial growth is
constant (see also G. Carron \cite{[Ca]} for further generalizations). In some
sense, Theorem \ref{Mainthm} generalizes their results to the setting of smooth
metric measure spaces.
\end{remark}

This rest of the paper is organized as follows. In Section \ref{sec2}, we recall
some known results, such as $f$-volume comparison theorems, local Poincar\'e
inequalities for the Bakry-\'Emery Ricci curvature bounded below. In Section
\ref{sec3}, we adapt an iterated argument to prove Theorem \ref{Mainthm}.

\textbf{Acknowledgement}.
This work is supported by the NSFC (11671141) and the Natural Science Foundation of Shanghai (17ZR1412800).


\section{Preliminaries}\label{sec2}
Let $(M,g,e^{-f}dv)$ be an $n$-dimensional complete noncompact smooth metric measure space.
For any point $x\in M$ and $R>0$, we denote
\[
A(R):=A(x,R)=\sup_{y\in B_x(3R)}|f(y)|.
\]
We often write $A$ for short. In \cite{[WW]} Wei and Wylie
proved a relative $f$-volume comparison theorem.
\begin{lemma}\label{comp}
Let $(M,g,e^{-f}dv)$ be an $n$-dimensional complete noncompact
smooth metric measure space. If
\[
\mathrm{Ric}_f\geq-(n-1)\, K
\]
for some constant $K\geq0$, then
\begin{equation}\label{volcomp}
\frac{V_f(B_x(R_1,R_2))}{V_f(B_x(r_1,r_2))}\leq
\frac{V^{n+4A}_K(B_x(R_1,R_2))}{V^{n+4A}_K(B_x(r_1,r_2))}
\end{equation}
for any $0<r_1<r_2,\ 0<R_1<R_2$, $r_1\leq R_1,\ r_2\leq R_2$, where
$B_x(R_1,R_2)=B_x(R_2)\backslash B_x(R_1)$, and
$A=A(x,\frac{1}{3}R_2)$. Here ${V^{n+4A}_K(B_x(r))}$ denotes the
volume of the ball in the model space $M^{n+4A}_K$, i.e., the simply
connected space form of dimension $n+4A$ with constant sectional
curvature $-K$.
\end{lemma}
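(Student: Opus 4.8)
The plan is to follow the Wei--Wylie comparison scheme, working with the weighted volume element in geodesic polar coordinates and tracking how the hypothesis $|f|\le A$ feeds a dimensional shift $n\mapsto n+4A$ into the model. Fix the base point $x$ and write $dv = \mathcal{A}(r,\theta)\,dr\,d\theta$ in geodesic polar coordinates about $x$, valid inside the cut locus (and setting the area element to $0$ beyond it, which only helps the inequalities). Put $\mathcal{A}_f(r,\theta) = e^{-f}\mathcal{A}(r,\theta)$, so that the unweighted and weighted mean curvatures of the geodesic sphere along the ray $\theta$ are $m = \mathcal{A}'/\mathcal{A} = \Delta r$ and $m_f = \mathcal{A}_f'/\mathcal{A}_f = m - \partial_r f = \Delta_f r$. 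Let $\mathrm{sn}_K(r) = \sinh(\sqrt{K}\,r)/\sqrt{K}$ (and $\mathrm{sn}_0(r)=r$), so the area element of the model $M_K^{n+4A}$ is a constant multiple of $\mathrm{sn}_K(r)^{\,n+4A-1}$, with model mean curvature $m_K^{N}(r) = N\sqrt{K}\coth(\sqrt{K}\,r)$, where $N := n+4A-1$.

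First I would record the weighted Riccati inequality. The radial Riccati (Bochner) inequality $m' \le -m^2/(n-1) - \mathrm{Ric}(\partial_r,\partial_r)$, combined with $\nabla^2 f(\partial_r,\partial_r)=\partial_r^2 f$, $\mathrm{Ric}_f = \mathrm{Ric}+\nabla^2 f$ and $\mathrm{Ric}_f(\partial_r,\partial_r)\ge -(n-1)K$, gives
\[
m_f' = m' - \partial_r^2 f \le -\frac{m^2}{n-1} - \mathrm{Ric}_f(\partial_r,\partial_r) \le -\frac{m^2}{n-1} + (n-1)K.
\]
Comparing with the model identity $(m_K^{n-1})' = -\frac{(m_K^{n-1})^2}{n-1} + (n-1)K$ and writing $\mathcal{H} := m - m_K^{n-1}$, a short computation shows that the second-derivative term $\partial_r^2 f$ cancels, leaving
\[
\bigl(m_f - m_K^{n-1}\bigr)' \le -\frac{m+m_K^{n-1}}{n-1}\,\mathcal{H},\qquad \mathcal{H} = \bigl(m_f - m_K^{n-1}\bigr) + \partial_r f .
\]
This is the key differential inequality; note that it involves $\partial_r f$ only, not $\partial_r^2 f$.

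Next I would integrate this inequality against a suitable $\mathrm{sn}_K$-power integrating factor and convert it into a monotonicity statement for the weighted area element. The point is to show that, for each fixed direction $\theta$, the ratio
\[
r \longmapsto \frac{\mathcal{A}_f(r,\theta)}{\mathrm{sn}_K(r)^{\,n+4A-1}}
\]
is non-increasing. Here is where $|f|\le A$ is essential: the term $\partial_r f$ is not controlled pointwise, but its integral over any subinterval is bounded by $2A$, and collecting the boundary contributions $e^{\pm f}$ coming from $\mathcal{A}_f = e^{-f}\mathcal{A}$ together with the integrated mean-curvature correction produces exactly an extra factor whose growth is absorbed by the $4A$ additional dimensions of the model area element $\mathrm{sn}_K(r)^{\,n+4A-1}$. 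This bookkeeping---turning an only integrally bounded weight into a genuine dimensional shift---is the main obstacle, and is precisely the content of the Wei--Wylie argument I would reproduce.

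Finally I would pass from the pointwise (in $\theta$) comparison to the annular statement \eqref{volcomp}. Integration over $\theta\in S^{n-1}$ preserves the monotonicity of the ratio: writing $\tilde{\mathcal{A}}_f(r) = \int_{S^{n-1}}\mathcal{A}_f(r,\theta)\,d\theta$ and letting $\bar{\mathcal{A}}_K(r)$ denote the total area element of $M_K^{n+4A}$, the quantity $\tilde{\mathcal{A}}_f(r)/\bar{\mathcal{A}}_K(r)$ is non-increasing, since for $r\le R$ one has $\int \mathcal{A}_f(R,\theta)\,\bar{\mathcal{A}}_K(r)\,d\theta \le \int \mathcal{A}_f(r,\theta)\,\bar{\mathcal{A}}_K(R)\,d\theta$. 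Because $V_f(B_x(a,b)) = \int_a^b \tilde{\mathcal{A}}_f(r)\,dr$ and $V_K^{n+4A}(B_x(a,b)) = \int_a^b \bar{\mathcal{A}}_K(r)\,dr$, the inequality \eqref{volcomp} under the orderings $0<r_1<r_2$, $0<R_1<R_2$, $r_1\le R_1$, $r_2\le R_2$ then follows from the elementary lemma that if $g/h$ is non-increasing with $h>0$ then $\bigl(\int_{R_1}^{R_2} g\bigr)\bigl(\int_{r_1}^{r_2} h\bigr) \le \bigl(\int_{r_1}^{r_2} g\bigr)\bigl(\int_{R_1}^{R_2} h\bigr)$, applied with $g=\tilde{\mathcal{A}}_f$ and $h=\bar{\mathcal{A}}_K$. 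This completes the proof.
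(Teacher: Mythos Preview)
Your outline is the standard Wei--Wylie argument and is correct in structure: the weighted Riccati inequality, the conversion of the oscillation bound $|f|\le A$ into a dimensional shift $n\mapsto n+4A$ via integration of $\partial_r f$, the resulting monotonicity of $\mathcal{A}_f(r,\theta)/\mathrm{sn}_K(r)^{\,n+4A-1}$, and the passage to annuli via the Gromov-type lemma are all the right moves. The one place you defer rather than execute is the ``bookkeeping'' step where the $4A$ actually appears; that is the heart of the matter, and in a full proof you would need to carry out the integration of the mean-curvature inequality explicitly (as in Wei--Wylie, Theorem~1.2) rather than just assert that it works.

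As for comparison with the paper: the paper does not prove this lemma at all. It is quoted directly from \cite{[WW]} as a known result and used as a black box. So your proposal already goes well beyond what the paper does, and there is nothing in the paper's treatment to compare your argument against.
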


For model space $V^{n+4A}_K(B_x(r))$, if $K>0$, this model space is
the hyperbolic space; if $K=0$, this model space is the Euclidean
space. In any case, we have the following lower and upper estimates
\begin{equation}\label{props1}
\omega_{n+4A}\cdot r^{n+4A}\leq V_K^{n+4A}(B_x(r))\leq \omega_{n+4A}\cdot
r^{n+4A}e^{(n-1+4A)\sqrt{K}\,r},
\end{equation}
where $\omega_{n+4A}$ is the volume of the unit
ball in $(n+4A)$-dimensional Euclidean space.

\

In \cite{[WuWu2]}, P. Wu and the author proved a local Neumann
Poincar\'e inequality on complete smooth metric measure spaces.
\begin{lemma}\label{NeuPoin}
Let $(M,g,e^{-f}dv)$ be an $n$-dimensional complete noncompact
smooth metric measure space with
\[
\mathrm{Ric}_f\geq-(n-1)\, K
\]
for some constant $K\geq0$. Fix a point $o\in M$ and $R>0$. For any
$x\in B(o,R)$, we have
\begin{equation}\label{Nepoinineq}
\int_{B_x(r)}|\varphi-\varphi_{B_x(r)}|^2 e^{-f}dv\leq
c_1e^{c_2A+c_3(1+A)\sqrt{K}r}\cdot r^2\int_{B_x(r)}|\nabla
\varphi|^2e^{-f}dv
\end{equation}
for all $0<r<R$ and $\varphi\in C^\infty(B_x(r))$, where
$\varphi_{B_x(r)}:=\int_{B_x(r)}\varphi d\mu/\int_{B_x(r)}d\mu$;
$A=\sup_{y\in B_o(3R)}|f(y)|$, and where the coefficient $c_i$ are
constants, depending only on the dimension $n$.
\end{lemma}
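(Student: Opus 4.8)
The plan is to obtain \eqref{Nepoinineq} from two standard ingredients of the De Giorgi--Nash--Moser circle of ideas, adapted to the weighted measure $d\mu=e^{-f}dv$: a weighted volume doubling property and a weighted segment inequality, followed by a self--improvement step that transfers the gradient term back onto the same ball. Throughout fix $x\in B_o(R)$ and $0<r<R$, and note that since $B_x(2r)\subset B_o(3R)$ we have $e^{-A}\,dv\le d\mu\le e^{A}\,dv$ on $B_x(2r)$. First I would record the doubling inequality: from the relative $f$--volume comparison \eqref{volcomp} (taking the inner radii to zero) one gets the Bishop--Gromov monotonicity of $r\mapsto V_f(B_x(r))/V^{n+4A}_K(B_x(r))$, whence, by the model--space bounds \eqref{props1},
\[
\frac{V_f(B_x(2r))}{V_f(B_x(r))}\le \frac{V^{n+4A}_K(B_x(2r))}{V^{n+4A}_K(B_x(r))}\le 2^{n+4A}\,e^{2(n-1+4A)\sqrt K\,r}=:C_D.
\]
The crucial observation is that the effective dimension $n+4A$ enters only through $2^{n+4A}$ and $e^{(n-1+4A)\sqrt K r}$, both already of the admissible exponential type $c\,e^{cA+c(1+A)\sqrt K r}$.

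Next I would establish the Cheeger--Colding segment inequality for $\mu$. For nonnegative $g$ put $\mathcal F_g(y_1,y_2)=\inf_\gamma\int_0^{d(y_1,y_2)}g(\gamma(s))\,ds$, the infimum over minimizing geodesics; the target is
\[
\int_{B_x(r)\times B_x(r)}\mathcal F_g(y_1,y_2)\,d\mu(y_1)\,d\mu(y_2)\le C_S\,r\,V_f(B_x(r))\int_{B_x(2r)}g\,d\mu,
\]
with $C_S\le c\,e^{cA+c(1+A)\sqrt K r}$. The proof is the usual one: freeze $y_1$, pass to geodesic polar coordinates centered at $y_1$, and control the weighted Jacobian ratio along each radial geodesic by the Wei--Wylie $f$--Laplacian comparison, which holds with effective dimension $n+4A$ precisely because $|f|\le A$ on $B_o(3R)$. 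This is the step that generates the exponential dependence on $A$ and $\sqrt K r$.

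Applying the segment inequality with $g=|\nabla\varphi|^2$, the Cauchy--Schwarz bound $|\varphi(y_1)-\varphi(y_2)|^2\le d(y_1,y_2)\,\mathcal F_{|\nabla\varphi|^2}(y_1,y_2)\le 2r\,\mathcal F_{|\nabla\varphi|^2}(y_1,y_2)$, and the elementary bound $\int_{B_x(r)}|\varphi-\varphi_{B_x(r)}|^2 d\mu\le \frac{1}{V_f(B_x(r))}\int_{B_x(r)\times B_x(r)}|\varphi(y_1)-\varphi(y_2)|^2\,d\mu\,d\mu$, I would first obtain the weak Poincar\'e inequality $\int_{B_x(r)}|\varphi-\varphi_{B_x(r)}|^2 d\mu\le C\,r^2\int_{B_x(2r)}|\nabla\varphi|^2 d\mu$, in which the gradient is integrated over the doubled ball. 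To reach the strong form \eqref{Nepoinineq} I would then invoke the doubling constant $C_D$ in the standard Jerison/Haj{\l}asz--Koskela--Saloff--Coste self--improvement (a Whitney decomposition of $B_x(r)$ together with a telescoping chain of balls), which converts a weak $(2,2)$--Poincar\'e together with doubling into a strong $(2,2)$--Poincar\'e on $B_x(r)$; alternatively Buser's isoperimetric route would work directly but is harder to bookkeep.

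The main obstacle is controlling the dependence on $A$. Because the effective dimension is $n+4A$, a careless estimate in the segment step could introduce a factor such as $(n+4A)!$ or $\omega_{n+4A}^{-1}$ with prohibitive growth; the point is to arrange all dimension--dependent quantities to appear only through $2^{n+4A}$ and $e^{(n-1+4A)\sqrt K r}$, which are exactly of the admissible form. A second, more technical difficulty is that the doubling constant $C_D$ itself depends exponentially on $A$ and $r$, so in the self--improvement step one must use a version whose output constant grows only polynomially in $C_D$; otherwise iterating doubling would spoil the clean exponential dependence demanded by \eqref{Nepoinineq}.
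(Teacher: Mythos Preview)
The paper does not actually prove this lemma: it is quoted verbatim as a result of \cite{[WuWu2]} (``P.~Wu and the author proved a local Neumann Poincar\'e inequality\ldots''), so there is no in-paper argument to compare your proposal against. What you have written is a reasonable outline of the standard route to such a weighted Neumann--Poincar\'e inequality---Wei--Wylie comparison $\Rightarrow$ doubling and a segment/Buser-type inequality $\Rightarrow$ weak $(2,2)$-Poincar\'e $\Rightarrow$ strong $(2,2)$-Poincar\'e via a Whitney chaining self-improvement---and your identification of the two pressure points (keeping all $A$-dependence in the form $e^{cA}$ rather than through $\omega_{n+4A}$, and ensuring the self-improvement only costs a polynomial in $C_D$) is exactly right.

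One caution on your sketch: in the self-improvement step you must be explicit that the weak Poincar\'e is applied only to Whitney sub-balls $B'$ with $2B'\subset B_x(r)$, since $\varphi$ is assumed smooth only on $B_x(r)$; the Jerison/Haj\l asz--Koskela machinery allows this, but a careless formulation that integrates $|\nabla\varphi|$ over $B_x(2r)$ at the top level would not match the hypothesis. Also, the segment-inequality route via Cheeger--Colding is one option, but the more direct path in this setting---and the one more commonly taken in the smooth metric measure space literature, including \cite{[WuWu2]}---is Buser's argument adapted to $\Delta_f$, which avoids the segment inequality entirely and makes the constant bookkeeping somewhat cleaner.
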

We remark that the local $f$-Neumann Poincar\'e inequality and so called $f$-volume
doubling property can imply a local Sobolev inequality and a mean value inequality for
solutions to the $f$-heat equation on smooth metric measure spaces. Putting these
inequalities together, we can furthermore prove Moser's Harnack inequality by using
the De Giorgi-Nash-Moser theory. For detailed discussions, the interested
readers can refer to \cite{[WuWu2]}.

\

In this paper, we are concerned with some analytic and geometric results
on $(M,g,e^{-f}dv)$ when $\mathrm{Ric}_f\ge0$. Under this curvature
assumption, Lemma \ref{comp} implies that
\begin{proposition}\label{volbound}
Let $(M,g,e^{-f}dv)$ be an $n$-dimensional complete noncompact
smooth metric measure space. Fix a point $o\in M$. If
$\mathrm{Ric}_f\ge0$, then
\[
V_f(B_o(R))\leq C \,R^{n+4A}
\]
for all $R>1$, where $C$ is a constant, which only depends on $n$,
$V_f(B_o(1))$ and $A:=\sup_{y\in B_o(R)}|f(y)|$.
\end{proposition}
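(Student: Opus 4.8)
The plan is to obtain the estimate as a direct corollary of the relative $f$-volume comparison of Lemma~\ref{comp} in the borderline case $K=0$, where the model volume $V^{n+4A}_0(B_x(r))=\omega_{n+4A}\,r^{n+4A}$ is simply a power of the radius (indeed the two sides of \eqref{props1} coincide when $K=0$).

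First I would pass from the annular comparison \eqref{volcomp} to the corresponding ball comparison. Fix $R>1$, put $x=o$ and $A=\sup_{y\in B_o(R)}|f(y)|$, and apply \eqref{volcomp} with $K=0$, $r_1=R_1=\varepsilon$, $r_2=1$, $R_2=R$ for $0<\varepsilon<1$; note that the value of $A=A(o,\tfrac13R_2)$ does not depend on $\varepsilon$. This yields
\[
\frac{V_f\bigl(B_o(\varepsilon,R)\bigr)}{V_f\bigl(B_o(\varepsilon,1)\bigr)}\ \le\ \frac{\omega_{n+4A}\,(R^{n+4A}-\varepsilon^{n+4A})}{\omega_{n+4A}\,(1-\varepsilon^{n+4A})}.
\]
Letting $\varepsilon\to0^+$ I arrive at
\[
V_f\bigl(B_o(R)\bigr)\ \le\ V_f\bigl(B_o(1)\bigr)\,R^{n+4A}\qquad\text{for all }R>1,
\]
which is the claim with $C=V_f(B_o(1))$ (equivalently one may absorb the normalizing constant $\omega_{n+4A}$ and state $C$ in the form depending on $n$, $A$ and $V_f(B_o(1))$ as written).

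The only step requiring a word of justification is the limit $\varepsilon\to0^+$: on the left it follows from $B_o(\varepsilon,s)\uparrow B_o(s)\setminus\{o\}$ as $\varepsilon\downarrow0$ together with $V_f(\{o\})=0$, and on the right from $R^{n+4A}-\varepsilon^{n+4A}\to R^{n+4A}$ and $1-\varepsilon^{n+4A}\to1$. I do not anticipate any genuine obstacle: the proposition is essentially a bookkeeping consequence of Lemma~\ref{comp} once one observes that for $K=0$ the model volume is a pure power of the radius. If one prefers to avoid the limiting argument altogether, an equivalent route is to cover $B_o(R)\setminus B_o(1)$ by the dyadic annuli $B_o(2^j,2^{j+1})$ for $0\le j\le\lceil\log_2 R\rceil$, bound each ratio $V_f(B_o(2^{j+1}))/V_f(B_o(1))$ by a power of $2^{j+1}$ via \eqref{volcomp}, and sum the resulting geometric series, at the cost of an extra factor $2^{n+4A}$ in the constant.
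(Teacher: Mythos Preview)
Your proof is correct and follows essentially the same route as the paper: a direct application of Lemma~\ref{comp} with $K=0$, $x=o$, $r_2=1$, $R_2=R$, using that the Euclidean model volume is a pure power of the radius. The only difference is cosmetic---the paper simply sets $r_1=R_1=0$ in \eqref{volcomp}, whereas you take $r_1=R_1=\varepsilon$ and pass to the limit, which is a slightly more careful way to handle the strict inequalities $0<r_1<r_2$ in the hypotheses of Lemma~\ref{comp}.
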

\begin{proof}
In Lemma \ref{comp}, we let $r_1=R_1=0$, $r_2=1$, $R_2=R$ and $x=o$, and then
\begin{equation*}
\begin{aligned}
\frac{V_f(B_o(R))}{V_f(B_o(1))}\leq \frac{V^{n+4A}_0(B_o(R))}{V^{n+4A}_0(B_o(1))}
&\leq C(n,A)R^{n+4A},
\end{aligned}
\end{equation*}
where $A=\sup_{y\in B_o(R)}|f(y)|$. Note that we also used \eqref{props1}
in the above second inequality. Hence
\[
V_f(B_o(R))\leq C \,R^{n+4A}
\]
for all $R>1$, where $C$ is a positive constant depending on $n$, $A$ and
$V_f(B_o(1))$.
\end{proof}

\vspace{0.4cm}

In \cite{[GriSal]}, A. Grigor'yan and L. Saloff-Coste introduced a useful notion:
\emph{remote ball}. Recall that, for a fixed point $o\in M$, a ball $B(x,\rho)$ in $M$ is
called \emph{remote ball} if
\[
d(o,x)\geq 3\rho.
\]
When $\mathrm{Ric}_f\geq -c\,r^{-2}(x)$, A. Grigor'yan and L. Saloff-Coste
\cite{[GriSal]} showed that all remote balls in manifold $M$ satisfy
the volume doubling property, the Poincar\'e inequality and the parabolic Harnack
inequality. In our case, for all remote balls $B(x,\rho)$, Lemma \ref{NeuPoin}
can be restated as
\begin{proposition}\label{NeuPoin2}
Let $(M,g,e^{-f}dv)$ be an $n$-dimensional complete noncompact
smooth metric measure space with
\[
\mathrm{Ric}_f\ge0.
\]
Let $o\in M$ be a fixed point and $R>0$. For all remote balls $B_x(\rho)$, $x\in B_o(R)$,
we have
\begin{equation}\label{Nepoinineq}
\int_{B_x(\rho)}|\varphi-\varphi_{B_x(\rho)}|^2e^{-f}dv\leq
c_4 e^{c_5\bar{A}}\cdot R^2\int_{B_x(\rho)}|\nabla\varphi|^2e^{-f}dv
\end{equation}
for any $\varphi\in C^\infty(B_x(\rho))$, where
$\varphi_{B_x(\rho)}:=\int_{B_x(\rho)}\varphi e^{-f}dv/\int_{B_x(\rho)}e^{-f}dv$ and
$\bar{A}=\sup_{y\in B_o(3R)}|f(y)|$. Here $c_4$ and $c_5$ are constants
depending only on $n$.
\end{proposition}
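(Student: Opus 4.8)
The plan is to obtain Proposition \ref{NeuPoin2} directly from Lemma \ref{NeuPoin} by specializing to $K=0$ and exploiting the fact that a remote ball centered inside $B_o(R)$ automatically has radius smaller than $R$. Since $\mathrm{Ric}_f\ge 0$, the hypothesis of Lemma \ref{NeuPoin} holds with $K=0$, so the factor $e^{c_3(1+A)\sqrt{K}r}$ appearing in \eqref{Nepoinineq} becomes $1$, and for every $x\in B_o(R)$, every $0<r<R$, and every $\varphi\in C^\infty(B_x(r))$ one has
\[
\int_{B_x(r)}|\varphi-\varphi_{B_x(r)}|^2 e^{-f}dv\le c_1 e^{c_2 A}\, r^2\int_{B_x(r)}|\nabla\varphi|^2 e^{-f}dv,
\]
where $A=\sup_{y\in B_o(3R)}|f(y)|$ and the average $\varphi_{B_x(r)}$ is taken against $d\mu=e^{-f}dv$.

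Next I would verify the radius bound. If $B_x(\rho)$ is a remote ball with $x\in B_o(R)$, then by definition $d(o,x)\ge 3\rho$, while $x\in B_o(R)$ gives $d(o,x)<R$; hence $\rho\le d(o,x)/3<R/3<R$. Thus Lemma \ref{NeuPoin} applies with $r=\rho$, giving
\[
\int_{B_x(\rho)}|\varphi-\varphi_{B_x(\rho)}|^2 e^{-f}dv\le c_1 e^{c_2 A}\,\rho^2\int_{B_x(\rho)}|\nabla\varphi|^2 e^{-f}dv,
\]
and since $0<\rho<R$ we have $\rho^2<R^2$, so the right-hand side only increases if we replace $\rho^2$ by $R^2$. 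Setting $c_4:=c_1$, $c_5:=c_2$, and $\bar A:=A=\sup_{y\in B_o(3R)}|f(y)|$ yields exactly the claimed inequality \eqref{Nepoinineq}, with $c_4,c_5$ depending only on $n$.

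There is no real obstacle here; the statement is a convenient reformulation of Lemma \ref{NeuPoin} adapted to the family of remote balls. The only things to be checked carefully are that the two averaging conventions agree (both use the weighted measure $d\mu=e^{-f}dv$, so $\varphi_{B_x(\rho)}$ has the same meaning in both places) and that inflating $\rho^2$ to $R^2$ is legitimate (it is, since $\rho<R$ and every term involved is nonnegative). The gain, which is what makes this form useful in the proof of Theorem \ref{Mainthm}, is that the Poincar\'e constant on \emph{every} remote ball is now controlled by the single scale $R$ and the single quantity $\bar A$, uniformly in the center $x$ and the radius $\rho$.
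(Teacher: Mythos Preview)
Your proposal is correct and matches the paper's approach: the paper itself offers no formal proof of Proposition~\ref{NeuPoin2}, merely introducing it with the sentence ``In our case, for all remote balls $B(x,\rho)$, Lemma~\ref{NeuPoin} can be restated as\ldots'', and you have supplied exactly the details that justify this restatement (set $K=0$, use the remoteness condition to bound $\rho<R$, and absorb $\rho^2\le R^2$).
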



\section{Proof of Theorem \ref{Mainthm}}\label{sec3}
In this section, we first follow the argument of \cite{[Ca]} to give a useful
gradient estimate in the integral sense. The proof depends on the cut-off function
technique and iterated procedure. Then we apply this estimate to prove Theorem
\ref{Mainthm}.
\begin{lemma}\label{keylem}
Let $(M,g,e^{-f}dv)$ be an $n$-dimensional complete noncompact
smooth metric measure space. For any a point $o\in M$ and $R>0$,
let $u$ be a $f$-harmonic smooth function on $B_o(2R)$, and
\[
\mathrm{diam}\, \partial B_o(r)\leq \epsilon r\quad \mathrm{with}\quad \epsilon\in (0, 1/12)
\]
for all $r\in [R,2R]$. Assume that all remote balls $B_x(\rho)$, $x\in B_o(2R)$, satisfies
a local Poincar\'e inequality:
\begin{equation}\label{Nepoinineq}
\int_{B_x(\rho)}|\varphi-\varphi_{B_x(\rho)}|^2e^{-f}dv\leq
c_4 e^{c_5\bar{A}}\cdot R^2\int_{B_x(\rho)}|\nabla\varphi|^2e^{-f}dv
\end{equation}
for any $\varphi\in C^\infty(B_x(\rho))$, where
$\varphi_{B_x(\rho)}:=\int_{B_x(\rho)}\varphi e^{-f}dv/\int_{B_x(\rho)}e^{-f}dv$ and
$\bar{A}:=\sup_{y\in B_o(6R)}|f(y)|$. Then we have
\begin{equation}\label{gradineq}
\int_{B_o(R)}|\nabla u|^2 e^{-f}dv\leq \delta^{\frac{1}{\epsilon}-1}\int_{B_o(2R)}|\nabla u|^2 e^{-f}dv,
\end{equation}
where $\delta:=\left(\frac{9c_4 e^{c_5\bar{A}}}{1+9c_4 e^{c_5\bar{A}}}\right)^{\frac 16}$,
and $\bar{A}:=\sup_{y\in B_o(6R)}|f(y)|$.
\end{lemma}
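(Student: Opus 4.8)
The plan is to adapt the iteration argument of Carron \cite{[Ca]} to the weighted setting. The key quantity to iterate is the weighted Dirichlet energy $D(r):=\int_{B_o(r)}|\nabla u|^2 e^{-f}dv$ over nested geodesic balls. The goal is to show a one-step contraction
\[
D(r)\le \delta^{1/3}\, D\big(r(1+\theta)\big)
\]
for some fixed $\theta$ comparable to $\epsilon$, and then iterate roughly $\frac{1}{\epsilon}$ times between radius $R$ and radius $2R$; since $(1+\theta)^{1/\epsilon}$ stays bounded while the constant gets raised to the power $\frac1\epsilon$, one lands on \eqref{gradineq}. So the core is the single contraction step, which is where the Poincar\'e inequality and the sublinear diameter hypothesis enter.

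First I would fix a radius $r\in[R,2R]$ and a cutoff function $\varphi$ that is $1$ on $B_o(r)$, supported in $B_o(r(1+\theta))$, with $|\nabla\varphi|\lesssim 1/(\theta r)$. Using $f$-harmonicity of $u$ and integration by parts against $\varphi^2(u-\bar u)$ for a suitable constant $\bar u$ (a weighted average of $u$ on the annular shell), the standard Caccioppoli manipulation gives
\[
\int_M \varphi^2|\nabla u|^2 e^{-f}dv \le 4\int_M |\nabla\varphi|^2 (u-\bar u)^2 e^{-f}dv,
\]
so the left side over $B_o(r)$ is controlled by $\frac{C}{\theta^2 r^2}$ times the $L^2_f$-oscillation of $u$ on the shell $B_o(r(1+\theta))\setminus B_o(r)$. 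The heart of the matter is to bound this oscillation by $\theta^2 r^2$ times the Dirichlet energy on a slightly larger set, with the \emph{small} constant $\delta^{1/3}$ rather than a universal one. Here the diameter hypothesis $\operatorname{diam}\partial B_o(r)\le\epsilon r$ is used: choosing $\bar u$ to be the weighted mean of $u$ over the boundary sphere (or over the thin shell), the shell has small intrinsic diameter in the directions tangent to the spheres, so it can be covered by a controlled number of \emph{remote} balls $B_{x_i}(\rho_i)$ of radius $\rho_i\sim\epsilon r$ — note $d(o,x_i)\ge r\ge R\ge 3\rho_i$ once $\epsilon<1/3$, so these are legitimately remote and \eqref{Nepoinineq} applies on each. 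Chaining the local Poincar\'e inequalities along overlapping remote balls across the shell, together with the fact that the shell's "width" in the radial direction is $\theta r$ and in the angular direction is $\le\epsilon r$, yields a Poincar\'e-type estimate on the shell with constant $c_4 e^{c_5\bar A}\cdot(\text{something})^2$ — and, crucially, picking $\theta\asymp\epsilon$ makes the geometric factors combine into the explicit $\delta=\big(\tfrac{9c_4e^{c_5\bar A}}{1+9c_4e^{c_5\bar A}}\big)^{1/6}<1$ appearing in the statement, after feeding the Caccioppoli bound back and absorbing a term.

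The step I expect to be the main obstacle is the chaining/covering argument that produces the \emph{contractive} constant: one must cover the thin shell by remote balls in a way that (i) keeps the number of balls and the overlap multiplicity under control independent of $r$, (ii) keeps every ball remote, and (iii) tracks the constants precisely enough to get $\delta<1$ with the stated sixth-root exponent rather than a useless large constant. The $6$ and $\frac16$ in $\delta$ strongly suggest the one-step contraction is set up so that after $1/\epsilon$ iterations one recovers exactly $\delta^{1/\epsilon-1}$; matching that bookkeeping is the delicate part. The hypothesis $\sup|f|<\infty$ is what keeps $e^{c_5\bar A}$ bounded uniformly in $R$ — without it the constant $\delta$ would drift to $1$ as $R\to\infty$ and the iteration would collapse, which is exactly the point flagged in the remark. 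Everything else (the Caccioppoli inequality, choice of cutoff, passing from energy on balls to energy on shells) is routine once the covering lemma is in hand.
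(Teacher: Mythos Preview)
Your overall architecture (Caccioppoli estimate $\Rightarrow$ Poincar\'e on the shell $\Rightarrow$ one-step contraction $\Rightarrow$ iterate) matches the paper. But you misread the geometric content of the hypothesis $\mathrm{diam}\,\partial B_o(r)\le \epsilon r$, and this causes you to invent an obstacle that is not there. The condition says the \emph{whole sphere} has diameter at most $\epsilon r$; hence for any $x\in\partial B_o(r)$ the entire thin annulus $B_o(r+\epsilon R)\setminus B_o(r)$ is contained in the \emph{single} ball $B_x(\epsilon R+\epsilon r)\subset B_x(3\epsilon R)$, and since $\epsilon<1/12$ this ball is remote. There is no covering, no chaining, no overlap multiplicity to control: one application of the assumed Poincar\'e inequality on that one ball does the job. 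The ``main obstacle'' you flag simply does not occur.

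Two smaller points. First, the paper does not use the Caccioppoli inequality with the usual constant $4$; because $u$ is $f$-harmonic the cross term integrates to zero exactly and one gets the identity
\[
\int_M |\nabla(\eta(u-c))|^2 e^{-f}dv=\int_M (u-c)^2|\nabla\eta|^2 e^{-f}dv,
\]
which cleans up the constants. Second, the origin of the exponent $\tfrac16$ is transparent once the single-ball picture is in place: from $B_x(3\epsilon R)\subset B_o(r+3\epsilon R)\setminus B_o(r-3\epsilon R)$ one gets
\[
D(r)\le \frac{9c_4 e^{c_5\bar A}}{1+9c_4 e^{c_5\bar A}}\,D(r+6\epsilon R),
\]
so each iteration advances the radius by $6\epsilon R$, and $\lfloor 1/(6\epsilon)\rfloor$ steps take you from $R$ to $2R$. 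No delicate bookkeeping is required.
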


\begin{proof}
For any a point $o\in M$ and $R>0$, suppose that $u\colon B_o(2R)\to\mathbb{R}$
be a $f$-harmonic function and  $c\in \mathbb{R}$ is a real number. Choosing
\[
r\in [R+3\epsilon R ,2R-3\epsilon R],
\]
where $\epsilon\in (0, 1/12)$, the diameter
hypothesis in lemma implies that there exists some $x\in \partial B_o(r)$ such that
\[
B_o(r+\epsilon R)\setminus B_o(r)\subset B_x(\epsilon R+\epsilon r).
\]
Let $\eta(x)$ be a $C^2$ cut-off function with support in $B_o(r+\epsilon R)$. That is,
\begin{equation*}
\eta(x)=\left\{ \begin{aligned}
&1&&\mathrm{if}\,\,x\in B_o(r),\\
&\frac{r+\epsilon R-d(o,x)}{\epsilon R}
&&\mathrm{if}\,\,x\in B_o(r+\epsilon R)\setminus B_o(r),\\
&0&&\mathrm{if}\,\,x\in M \setminus B_o(r+\epsilon R).\\
\end{aligned}\right.
\end{equation*}

We easily observe that
\[
|\nabla(\eta(u-c))|^2=\eta^2 |\nabla(u-c)|^2+2\eta(u-c)\langle\nabla\eta,\nabla(u-c)\rangle+(u-c)^2|\nabla\eta|^2.
\]
Integrating this identity with respect to the weighted
measure $e^{-f}dv$, we have
\[
\int_M |\nabla(\eta(u-c))|^2=\int_M \eta^2 |\nabla(u-c)|^2+2\eta(u-c)\langle\nabla\eta,\nabla(u-c)\rangle+(u-c)^2|\nabla\eta|^2.
\]
Notice that
\begin{equation}
\begin{split}\label{intepro}
\int_M \eta^2 |\nabla(u-c)|^2+2\eta(u-c)\langle \nabla\eta, \nabla(u-c)\rangle&=\int_M\langle \nabla((u-c)\eta^2), \nabla(u-c)\rangle\\
&=-\int_M(u-c)\eta^2\Delta_f(u-c)\\
&=0,
\end{split}
\end{equation}
where we have used integration by part with respect to $e^{-f}dv$ and the fact
that $u$ is $f$-harmonic.
Therefore,
\[
\int_M |\nabla(\eta(u-c))|^2e^{-f}dv=\int_{B(o,r+\epsilon R)}(u-c)^2 |\nabla\eta|^2e^{-f}dv.
\]
According to the definition of $\eta(x)$, by the above equality, we get that
\begin{equation*}
\begin{split}
\int_{B_o(r)}|\nabla u|^2e^{-f}dv&\le \int_{B_o(r+\epsilon R)} |\nabla(\eta(u-c))|^2e^{-f}dv\\
&=\int_{B_o(r+\epsilon R)}(u-c)^2|d\eta|^2e^{-f}dv\\
&\le\frac{1}{\epsilon^2 R^2}\int_{B_o(r+\epsilon R)\setminus B_o(r)}(u-c)^2e^{-f}dv\\
&\le\frac{1}{\epsilon^2 R^2}\int_{B_x(\epsilon R+\epsilon r)}(u-c)^2e^{-f}dv.
\end{split}
\end{equation*}
For the right hand side of the above inequality, since $\epsilon \le 1/12$,
this implies that the ball $B(x,\epsilon R+\epsilon r)$ is remote.
Hence if choosing
\[
c=u_{B_x(\epsilon (R+r))}=\frac{1}{\int_{B_x(\epsilon (R+r))}e^{-f}dv}\cdot\int_{B_x(\epsilon (R+r))}ue^{-f}dv,
\]
then using the local $f$-Poincar\'e inequality \eqref{Nepoinineq} and the fact that $r+R\le 3 R$,
we have that
\[
\int_{B_o(r)}|\nabla u|^2e^{-f}dv\le 9c_4 e^{c_5\bar{A}}\int_{B_x(3\epsilon R)}|\nabla u|^2e^{-f}dv,
\]
where $\bar{A}=\sup_{y\in B_o(6 R)}|f(y)|$. Also noticing that
\[
B_x(3\epsilon R)\subset B_o(r+3\epsilon R)\setminus B_o(r-3\epsilon R),
\]
hence we get
\begin{equation*}
\begin{split}
\int_{B_o(r-3\epsilon R)}|\nabla u|^2e^{-f}dv&\le\int_{B_o(r)}|\nabla u|^2e^{-f}dv\\
&\le9c_4e^{c_5\bar{A}}\int_{B_o(r+3\epsilon R)\setminus B_o(r-3\epsilon R)}|\nabla u|^2e^{-f}dv.
\end{split}
\end{equation*}
That is, for all $r\in [R,R-6\epsilon R]$, we have
\[
\int_{B_o(r)}|\nabla u|^2e^{-f}dv
\le\frac{9c_4e^{c_5\bar{A}}}{1+9c_4e^{c_5\bar{A}}}\int_{B_o(r+6\epsilon R)}|\nabla u|^2e^{-f}dv.
\]
Iterating this inequality, we finally get
\[
\int_{B_o(R)}|\nabla u|^2e^{-f}dv
\le\left(\frac{9c_4e^{c_5\bar{A}}}{1+9c_4 e^{c_5\bar{A}}}\right)^N\int_{B_o(2R)}|\nabla u|^2e^{-f}dv
\]
provided that
\[
N 6\epsilon R\le R.
\]
At last the desired result follows by choosing
$\delta=\left(\frac{9c_4 e^{c_5\bar{A}}}{1+9c_4 e^{c_5\bar{A}}}\right)^{\frac 16}$.
\end{proof}

\

Now we are ready to apply a similar argument of \cite{[Ca]} to
prove Theorem \ref{Mainthm} by using Lemma \ref{keylem}.

\begin{proof}[Proof of Theorem \ref{Mainthm}]
Let $u\colon M\rightarrow \mathbb{R}$ be a $f$-harmonic function with polynomial growth
of order $\nu$, namely,
\[
|u(x)|\le C (1+d(o,x))^\nu .
\]
For $R>>1$, we define
\[
I_R:=\int_{B_o(R)} |\nabla u|^2 e^{-f}dv
\]
and
\[
\epsilon(r):=\sup_{t\ge r} \frac{\rho(t)}{t},
\]
where $\displaystyle \rho(t):=\sup_{x, y\in \partial B_o(t)} d(x,y)$.

To estimate $I_R$, we shall introduce a $C_0^2(M)$ cut off function $\xi(x)$, which satisfies
\[
\xi(x)=\begin{cases}
1&\text{in}\quad B_o(R),\\
\frac{2R-d(o,x)}{R}&\text{in}\quad B_o(2R)\setminus B_o(R),\\
0&\text{in}\quad M\setminus B_o(2R).
\end{cases}
\]
Hence we have
\begin{equation}
\begin{split}\label{cappo}
I_R&\le\int_{B_o(2R)}|\nabla(\xi u)|^2e^{-f}dv\\
&=\int_{B_o(2R)}\Big[|u|^2|\nabla\xi|^2+\xi^2|\nabla u|^2+2\xi u\langle\nabla\xi,\nabla u\rangle\Big]e^{-f}dv\\
&=\int_{B_o(2R)}|u|^2|\nabla\xi|^2e^{-f}dv\\
&\le C R^{2\nu+n+4\bar{A}-2},
\end{split}
\end{equation}
where $C$ depends on $n$, $\bar{A}:=\sup_{x\in M}|f(x)|$ and $V_f(B_o(1))$.
Here in the third line of \eqref{cappo}, we used the same proposition as \eqref{intepro};
in the last line of \eqref{cappo}, we used Proposition \ref{volbound}.

On the other hand, if we iterate the inequality \eqref{gradineq} proved in Lemma \ref{keylem},
we can show that for all $R$ such that $\epsilon(R)\le 1/12$ :
\[
I_R\le\delta^{-\ell+\sum_{j=0}^{\ell-1} \frac{1}{\epsilon(2^j R)}}\, I_{2^\ell R},
\]
where $\delta=\left(\frac{9c_4 e^{c_5\bar{A}}}{1+9c_4 e^{c_5\bar{A}}}\right)^{\frac 16}$.
Applying \eqref{cappo} to the right hand side of the above inequality yields
\begin{equation}\label{eniequ}
I_R\le C\cdot \exp\left\{\ell
\Bigg[\left(\frac{1}{\ell}\sum_{j=0}^{\ell-1}\frac{1}{\epsilon(2^jR)}-1\right)\cdot\ln\delta+(2\nu+n+4\bar{A}-2)\cdot\ln 2\Bigg]\right\},
\end{equation}
where in the above inequality, $\bar{A}:=\sup_{x\in M}|f(x)|$,
$\delta:=\left(\frac{9c_4 e^{c_5\bar{A}}}{1+9c_4 e^{c_5\bar{A}}}\right)^{\frac 16}$
and $C:=C\left(n,\nu,R,\bar{A},V_f(B_o(1))\right)$.

For all sufficiently large $R$, the Cesaro convergence theorem shows that
\[
\lim_{\ell\to +\infty} \frac 1\ell\sum_{j=0}^{\ell-1} \frac{1}{\epsilon\left(2^j R\right)}=+\infty,
\]
since $\epsilon(2^j R)\to 0$ when $j\to+\infty$. Meanwhile, by our assumption:
\[
\sup_{x\in M}|f(x)|<+\infty,
\]
we know that $\bar{A}<+\infty$,
\[
0<\delta<1\quad \mathrm{and}\quad \ln\delta<0
\]
for any $R$ (even though $\ell\to +\infty$). Therefore, if $\ell\to +\infty$ in \eqref{eniequ},
then we conclude that
\[
I_R=0
\]
for all sufficiently large $R$. Therefore $u$ is constant.
\end{proof}


\bibliographystyle{amsplain}

\end{document}